\documentclass[leqno,12pt]{amsart}
\usepackage{amsfonts}
\usepackage{amsmath,amssymb,amsthm}

\setlength {\textwidth}{16.5cm}
\setlength {\textheight}{24cm}
\setlength{\oddsidemargin 0cm}
\setlength{\evensidemargin 0cm}
\setlength{\topmargin}{-0.5cm}

\newcommand{\R}{\mathbb R}
\newcommand{\E}{\mathbb E}
\renewcommand{\span}{\mathrm{span}}
\newcommand{\tr}{\mathrm{tr}}
\everymath{\displaystyle}

\newtheorem{theorem}{Theorem}

\newtheorem{corollary}[theorem]{Corollary}

\begin{document}

\title[Meridian surfaces with Constant Mean Curvature in  $\E^4_2$]{Meridian Surfaces with Constant Mean Curvature in Pseudo-Euclidean
4-space with Neutral Metric}

\author{Bet\"{u}l Bulca and Velichka Milousheva}
\address{Uluda\u{g} University, Department of Mathematics, 16059 Bursa, Turkey}
\email{bbulca@uludag.edu.tr}
\address{Institute of Mathematics and Informatics, Bulgarian Academy of Sciences,
Acad. G. Bonchev Str. bl. 8, 1113, Sofia, Bulgaria;   "L. Karavelov"
Civil Engineering Higher School, 175 Suhodolska Str., 1373 Sofia,
Bulgaria} \email{vmil@math.bas.bg}

\subjclass[2010]{53A35, 53B30, 53B25}
\keywords{Meridian surfaces, quasi-minimal surfaces, constant mean curvature,  pseudo-Euclidean space with neutral metric}

\begin{abstract}
In the present paper we consider a special class of Lorentz surfaces in the four-dimensional pseudo-Euclidean space with neutral metric which are 
one-parameter systems of meridians of rotational hypersurfaces with timelike or spacelike axis and call them meridian surfaces. 
We give the complete classification of  minimal and quasi-minimal meridian surfaces. We also classify the meridian surfaces with 
non-zero  constant mean curvature.
\end{abstract}

\maketitle

\section{Introduction}

The study of surfaces with constant mean curvature is one of the main topics in classical differential geometry which  goes back to the latter part of the 18th century. Lagrange was the first who found the minimal surface equation in 1761 when he looked  for a necessary condition for minimizing a certain integral. Actually, the notion of mean curvature was first formally defined by Meusnier in 1776. Throughout the 19th century grate mathematicians such as Gauss and Weierstrass devoted much of their studies to these surfaces. 
Constant mean curvature surfaces  in the 3-dimensional  Euclidean space are also  studied intensively nowadays by many geometers for their physical interpretation.
 For example, surfaces with constant curvature are important mathematical models of soap films and soap bubbles. 

Constant mean curvature surfaces (CMC surfaces) in arbitrary spacetime
are important objects for the special role they play in the theory of
general relativity. The study of CMC surfaces
involves not only geometric methods but also PDE
and complex analysis, that is why the theory of CMC surfaces is of
great interest not only for mathematicians but also for physicists
and engineers. Surfaces with constant mean curvature in
Minkowski space have been studied intensively in  the last years. See for example \cite{Liu-Liu-1}, \cite{Lop-2}, \cite{Sa}, \cite{Chav-Can}, \cite{Bran}.

In the four-dimensional pseudo-Euclidean space with neutral metric very few results are known on surfaces with constant mean curvature.  A special case of CMC surfaces are the quasi-minimal surfaces. A Lorentzian surface in a pseudo-Riemannian manifold is called quasi-minimal (pseudo-minimal or marginally trapped) if its mean curvature vector $H$ is lightlike at each point. Classification results on quasi-minimal surfaces in the pseudo-Euclidean space $\E^4_2$ have been obtained recently.
The classification of quasi-minimal surfaces with parallel mean
curvature vector in  $\E^4_2$ is given
in \cite{Chen-Garay}. In \cite{Chen1}  B.-Y. Chen classified
quasi-minimal Lorentz  flat surfaces in $\E^4_2$. As an application,
he gave the complete classification of biharmonic Lorentz surfaces
in  $\E^4_2$ with lightlike mean curvature vector. Several other
families of quasi-minimal surfaces  have also been classified. For
example, quasi-minimal surfaces with constant Gauss curvature in
$\E^4_2$ were classified in \cite{Chen2, Chen-Yang}. Quasi-minimal
Lagrangian surfaces and quasi-minimal slant surfaces in complex
space forms were classified, respectively, in \cite{Chen-Dillen} and
\cite{Chen-Mihai}. For an up-to-date survey on  quasi-minimal
surfaces, see also \cite{Chen3}.

In the present paper we construct special 2-dimensional Lorentz surfaces in $\E^4_2$ which are one-parameter systems of meridians of the rotational hypersurfaces with timelike or spacelike axis and call them meridian surfaces in $\E^4_2$. We describe all minimal meridian surfaces and show that all of them lie in hyperplanes of $\E^4_2$. We give the complete classification of  quasi-minimal meridian surfaces (Theorems \ref{T:quasi-minimal-a}, \ref{T:quasi-minimal-b}, and \ref{T:quasi-minimal-c}). We also classify the meridian surfaces with 
non-zero  constant mean curvature (Theorems \ref{T:CMC-a}, \ref{T:CMC-b}, and \ref{T:CMC-c}).

\section{Preliminaries}

Let $\mathbb{E}_{2}^{4}$ be the $4$-dimensional pseudo-Euclidean space with flat
metric of index $2$ given in local coordinates by
\begin{equation*}
\widetilde{g}=dx_{1}^{2}+dx_{2}^{2}-dx_{3}^{2}-dx_{4}^{2}, 
\end{equation*}
where $\left( x_{1},x_{2},x_{3},x_{4}\right) $ is a rectangular coordinate
system of $\mathbb{E}_{2}^{4}$.  We denote by $ \langle ., . \rangle$ the indefinite inner scalar product associated with  $\widetilde{g}$.
Since $\widetilde{g}$ is an indefinite
metric, a vector $v \in \E^4_2$ can have one of
the three casual characters: it can be \textit{spacelike} if $\langle v, v \rangle >0$
or $v=0$, \textit{timelike} if $\langle v, v \rangle<0$, and \textit{lightlike} if $\langle v, v \rangle =0$ and $v\neq 0$. This terminology is inspired by general relativity and the Minkowski 4-space $\E^4_1$.

We use the following denotations:
\begin{equation*}
\begin{array}{l}
\vspace{2mm}
\mathbb{S}^3_2(1) =\left\{V\in \E^4_2: \langle V, V \rangle =1 \right\}; \\
\vspace{2mm}
\mathbb{H}^3_1(-1) =\left\{ V\in \E^4_2: \langle V, V \rangle = -1\right\}.
\end{array}
\end{equation*}
The space $\mathbb{S}^3_2(1)$ is known as the de Sitter space, and the
space $\mathbb{H}^3_1(-1)$ is the hyperbolic space (or the anti-de Sitter space) \cite{O'N}.

Given a surface $M$ in $\mathbb{E}_{2}^{4}$, we denote by $g$
the induced metric of $\widetilde{g}$ on $M$. A surface $M$ in $\E^4_2$
 is called \emph{Lorentz}  if the
induced  metric $g$ on $M$ is Lorentzian, i.e. at each point $p\in M$ we have the following decomposition
$$\E^4_2 = T_pM \oplus N_pM$$
with the property that the restriction of the metric
onto the tangent space $T_pM$ is of
signature $(1,1)$, and the restriction of the metric onto the normal space $N_pM$ is of signature $(1,1)$.

We denote by $\nabla $ and $\overline{\nabla}$ the Levi-Civita connections
of $M$ and $\mathbb{E}_{2}^{4}$, respectively. Then for any vector fields $X,Y$
tangent to $M$ the Gauss formula is given by
\begin{equation*}
\overline{\nabla}_{X}Y=\nabla _{X}Y+h(X,Y),
\end{equation*}
where $h$ is the second fundamental form of $M$. If $D$ is the normal
connection on the normal bundle of $M$, then for any normal vector
field $\xi$ and any tangent vector field $X$ the Weingarten formula
is given by
\begin{equation*}
\overline{\nabla}_{X}\xi =-A_{\xi }X+D_{X}\xi,
\end{equation*}
where $A_{\xi}$ is the shape operator with respect to $\xi$. The shape operator and the second fundamental form are related by the
formula 
\begin{equation*}
\left \langle h(X,Y),\xi \right \rangle =\left \langle A_{\xi }X,Y\right
\rangle  
\end{equation*}
for any $X$ and $Y$ tangent to $M$ and any $\xi$ normal to $M$.

The mean curvature vector field $H$ of $M$ in $\mathbb{E}_{2}^{4}$
is defined as
\begin{equation*}
H=\frac{1}{2}\, \tr \, h.  \label{A8}
\end{equation*}
Thus, if $M$ is a
Lorentz surface and $\{X,Y\}$ is a local orthonormal frame  of
the tangent bundle such that $\langle X, X \rangle = 1$, $\langle Y, Y \rangle = -1$, then the mean curvature vector  field is given by
the formula $H = \frac{1}{2} \left(h(X,X) - h(Y,Y)\right)$.

A surface $M$ is called \textit{minimal} if its mean curvature vector vanishes
identically, i.e. $H=0$. A surface $M$ is called \textit{quasi-minimal}
(or \textit{pseudo-minimal}) if its mean curvature vector is lightlike at each point,
i.e. $H\neq 0$ and $\left \langle H,H\right \rangle =0$. Obviously,
quasi-minimal surfaces are always non-minimal. 

$M$ is said to have constant mean curvature  if $\langle H, H \rangle = const.$
We shall consider Lorentz surfaces in $\E^4_2$ for which  $\langle H, H \rangle = const \neq 0$. Such surfaces we call \textit{CMC surfaces}.

\section{Meridian Surfaces in Pseudo-Euclidean 4-Space}

Meridian surfaces in Euclidean 4-space we defined first in \cite{GM2}  as 2-dimensional surfaces lying on a
standard rotational hypersurface in $\mathbb{E}^{4}$. These surfaces are one-parameter systems of meridians of
the rotational hypersurface, that is why they are called \textit{meridian surfaces}.
The classification of  meridian surfaces with constant Gauss curvature, with constant mean curvature, Chen meridian surfaces and  meridian surfaces with parallel normal bundle is given in \cite{GM2}  and \cite{GM-BKMS}. Meridian surfaces in $\E^4$ having pointwise 1-type Gauss map are classified in
\cite{ABM}. 

The idea from the Euclidean case is used in \cite{GM6}  for the construction of special families of two-dimensional spacelike surfaces lying on rotational hypersurfaces with timelike or spacelike axis in the Minkowski space $\mathbb{E}_{1}^{4}$. These surfaces are called meridian surface of elliptic  or hyperbolic type, respectively. A local classification of marginally trapped meridian surfaces is given in \cite{GM6}.  Meridian surfaces in $\E^4_1$ with pointwise 1-type Gauss map are classified in \cite{AM}. The classification of  meridian surfaces of elliptic  or hyperbolic type with constant Gauss curvature, with constant mean curvature, Chen meridian surfaces and  meridian surfaces with parallel normal bundle is given in \cite{GM-MC}.

Following the idea from the Euclidean and Minkowski spaces, we shall construct Lorentz meridian surfaces in the pseudo-Euclidean 4-space $\E^4_2$  as one-parameter systems of meridians of rotational hypersurfaces with timelike or spacelike axis. 

\subsection{Meridian surfaces lying on a rotational hypersurface with timelike axis}

Let $Oe_1 e_2 e_3 e_4$ be the standard orthonormal frame  in $\E^4_2$, i.e. $\langle e_1,e_1 \rangle
=\langle e_2,e_2 \rangle =1, \langle e_3,e_3 \rangle = \langle e_4,e_4 \rangle = -1$. First we consider a
standard rotational hypersurface with timelike axis $Oe_4$. Similarly, we can consider a rotational hypersurface with axis $Oe_3$.

Since in the Minkowski space $\E^3_1 = \span\left \{ e_{1},e_{2},e_{3}\right \}$ there exist two types of  spheres, namely the  pseudo-sphere  $\mathbb{S}^2_1(1) =\left\{V\in \E^3_1: \langle V, V \rangle = 1\right \}$, i.e. the de Sitter space, and the pseudo-hyperbolic space $\mathbb{H}^2_1(-1)=\left\{V \in  \E^3_1: \langle V, V \rangle = - 1\right \}$, i.e. the anti-de Sitter space, we consider two types of rotational hypersurfaces about the axis  $Oe_4$.

\vskip 2mm
\textbf{Rotational hypersurface of first type.}

Let $f=f(u)$, $g=g(u)$ be smooth functions, defined in an interval $I\subset \R$. 
The first type rotational
hypersurface $\mathcal{M}^I$ in $\E^4_2$, obtained by the rotation of the
meridian curve $m:u\rightarrow (f(u),g(u))$ about the $Oe_4$-axis, is
parametrized as follows:
\begin{equation*}
\mathcal{M}^I:Z(u,w^1,w^2) = f(u)\left(\cosh w^1 \cos w^2 \,e_1 +   \cosh w^1 \sin w^2 \,e_2 + \sinh w^1 \,e_3\right) + g(u) \,e_4.
\end{equation*}
If we denote by $l^I(w^1,w^2) = \cosh w^1 \cos w^2 \,e_1 + \cosh w^1 \sin w^2 \,e_2 + \sinh w^1 \,e_3$ the unit position vector of the sphere
$\mathbb{S}^2_1(1)$ in $\E^3_1$ centered at the origin $O$, then the parametrization of the rotational hypersurface $\mathcal{M}^I$ is written as:
\begin{equation*}
\mathcal{M}^I:Z(u,w^1,w^2) = f(u) l^I(w^1,w^2) + g(u) \,e_4.
\end{equation*}
Now, we shall  construct Lorentz surfaces in $\E^4_2$ which are one-parameter systems of meridians of the hypersurface $\mathcal{M}^I$.

\vskip 2mm
First we consider a smooth spacelike curve $c:l=l(v)=l^I(w^1(v),w^2(v)), \,\, v \in J, \, J \subset \R$
on $\mathbb{S}^2_1(1)$ parametrized by the arc-length, i.e.  $\langle l',l'\rangle =1$.  We construct a two-dimensional  surface $\mathcal{M}'_a$  defined by:
\begin{equation}  \label{E:Eq-1-a}
\mathcal{M}'_a: z(u,v) = f(u)\, l(v) + g(u) \,e_4, \quad u \in I, \, v \in J.
\end{equation}
Since the surface $\mathcal{M}'_a$ is a
one-parameter system of meridians of $\mathcal{M}^I$,   we call
it a  \emph{meridian surface on $\mathcal{M}^I$}.

The tangent space of $\mathcal{M}'_a$ is spanned by the vector fields
$$z_u = f'(u) \,l(v) + g'(u) \,e_4; \qquad  z_v = f(u) \,l'(v),$$
so, the coefficients of the first fundamental form of $\mathcal{M}'$  are
$$E = \langle z_u, z_u \rangle =  f'^2 - g'^2; \quad F = \langle z_u, z_v \rangle = 0; \quad  G = \langle z_v, z_v \rangle = f^2.$$
Since we are interested in Lorentz surfaces, we assume that the meridian curve $m$ is timelike,  i.e. $f'^2 - g'^2 <0$. Without loss of generality we can assume that $f'^2 - g'^2 = -1$.
Then the coefficients of the first fundamental form are $E = -1; \, F = 0; \, G = f^2(u)$.
We consider the unit tangent vector fields  $X = z_u,\,\, Y = \frac{z_v}{f} = l'$,
which satisfy $\langle X,X \rangle = -1$,  $\langle Y,Y \rangle = 1$ and $\langle X,Y \rangle =0$.

Let  $t(v) = l'(v)$ be the tangent vector field of the curve $c$. Since $\langle t(v), t(v) \rangle = 1$,
 $\langle l(v), l(v) \rangle = 1$, and $\langle t(v), l(v) \rangle = 0$, then 
 there exists a unique (up to a sign)
 vector field $n(v)$, such that $\langle n(v), n(v) \rangle = -1$ and 
$\{l(v), t(v), n(v)\}$ is an orthonormal frame field in $\E^3_1 = \span\left \{ e_{1},e_{2},e_{3}\right \}$.  With respect to this
 frame field we have the following Frenet formulas of $c$ on $\mathbb{S}^2_1(1)$:
\begin{equation} \label{E:Eq-2}
\begin{array}{l}
\vspace{2mm}
l' = t;\\
\vspace{2mm}
t' = - \kappa \,n - l;\\
\vspace{2mm} n' = - \kappa \,t,
\end{array}
\end{equation}
 where $\kappa (v)= \langle t'(v), n(v) \rangle$ is the spherical curvature of $c$ on  $\mathbb{S}^2_1(1)$.
Now we consider the
following  normal vector fields:
\begin{equation} \label{E:Eq-2-2}
n_1 = n(v); \qquad n_2 = g'(u)\,l(v) + f'(u) \, e_4,
\end{equation}
which satisfy $\langle n_1, n_1 \rangle = -1$,
$\langle n_2, n_2 \rangle = 1$, $\langle n_1, n_2 \rangle =0$.

Taking into account \eqref{E:Eq-2} and \eqref{E:Eq-2-2} we get the following derivative formulas:
\begin{equation}  \label{E:Eq-3-0}
\begin{array}{ll}
\vspace{2mm}
\overline{\nabla}_X X = \kappa_m n_2; \qquad & \overline{\nabla}_X n_1 = 0; \\
\vspace{2mm}
\overline{\nabla}_X Y = 0; \quad & \overline{\nabla}_Y n_1 = -\frac{\kappa}{f}Y; \\
\vspace{2mm}
\overline{\nabla}_Y X = \frac{f'}{f}Y; \quad & \overline{\nabla}_X n_2 = \kappa_m X; \\
\vspace{2mm}
\overline{\nabla}_Y Y = \frac{f'}{f}X-\frac{\kappa}{f}n_1- \frac{g'}{f}n_2; \quad & \overline{\nabla}_Y n_2=\frac{g'}{f}Y,
\end{array}
\end{equation}
where $\kappa_m$ denotes the curvature of the meridian curve $m$,  which in  case of a timelike curve is given by the formula $\kappa_m (u)= f''(u) g'(u) - f'(u) g''(u)$.
Hence, we have 
\begin{equation} \label{E:Eq-3}
\begin{array}{l}
\vspace{2mm}
h(X,X) = \kappa_m\,n_2; \\
\vspace{2mm}
h(X,Y) =  0; \\
\vspace{2mm}
h(Y,Y) = - \frac{\kappa}{f}\,n_1 - \frac{g'}{f} \, n_2.
\end{array}
\end{equation}

Formulas \eqref{E:Eq-3} imply that the  mean curvature vector field $H$ of  $\mathcal{M}'_a$ is expressed as follows:
\begin{equation*}
H = -\frac{\kappa}{2f} \,n_1 - \frac{f \kappa_m +g'}{2f} \,n_2.
\end{equation*}
Using that $g'^2 = f'^2 + 1$ and $\kappa_m = \frac{f''}{g'}$, we obtain
\begin{equation} \label{E:Eq-Ha}
H=-\frac{\kappa}{2f} \,n_1 - \frac{f f''+(f')^2 + 1}{2f \sqrt{f'^2+1}} \,n_2.
\end{equation}

Now, let $c:l=l(v)=l^I(w^1(v),w^2(v)), \,\, v \in J, \, J \subset \R$ be a timelike curve
on $\mathbb{S}^2_1(1)$ parametrized by the arc-length, i.e.  $\langle l',l'\rangle = - 1$.  We consider the two-dimensional  surface $\mathcal{M}'_b$  defined by:
\begin{equation}  \label{E:Eq-1-b}
\mathcal{M}'_b: z(u,v) = f(u)\, l(v) + g(u) \,e_4, \quad u \in I, \, v \in J,
\end{equation}
where $f'^2-g'^2 = 1$.
The surface $\mathcal{M}'_b$ is another meridian surface on $\mathcal{M}^I$. 

In this case we consider an orthonormal frame field $\{l(v), t(v), n(v)\}$ of $\E^3_1$, such that $t = l'$,
$\langle l, l \rangle = 1$, $\langle t, t \rangle = -1$,
$\langle n, n \rangle = 1$. Now, the Frenet formulas of $c$ on $\mathbb{S}^2_1(1)$ are:
\begin{equation} \notag
\begin{array}{l}
\vspace{2mm}
l' = t;\\
\vspace{2mm}
t' =  \kappa \,n + l;\\
\vspace{2mm} n' =  \kappa \,t,
\end{array}
\end{equation}
 where $\kappa (v)= \langle t'(v), n(v) \rangle$.
The tangent vector fields of the meridian surface $\mathcal{M}'_b$  are
\begin{equation*}
z_u=f' \,l+g' \,e_4; \quad z_v = f\,t,
\end{equation*}
and since $f'^2-g'^2 = 1$, the coefficients of the first fundamental form are $E=1; \; F=0; \; G= - f^2(u)$.
We consider the orthonormal tangent frame $X=z_u$, $Y=\frac{z_v}{f}=t$ and the
orthonormal normal frame field defined by 
\begin{equation*}
n_1=n; \quad n_2=g' \,l+f'\, e_{4}.
\end{equation*}
Thus, we obtain a frame field $\left \{ X,Y,n_1,n_2\right \}$ of $\mathcal{M}'_b$ such that $\left \langle n_{1},n_{1}\right \rangle
=1,\left \langle n_{2},n_{2}\right \rangle =-1$ and $\left \langle
n_{1},n_{2}\right \rangle =0.$
With respect to this frame field we have the following derivative formulas:
\begin{equation} \label{E:Eq-3-b}
\begin{array}{ll}
\vspace{2mm}
\overline{\nabla}_X X = \kappa_m n_2; \qquad & \overline{\nabla}_X n_1 = 0; \\
\vspace{2mm}
\overline{\nabla}_X Y = 0; \quad & \overline{\nabla}_Y n_1 = \frac{\kappa}{f}Y; \\
\vspace{2mm}
\overline{\nabla}_Y X = \frac{f'}{f}Y; \quad & \overline{\nabla}_X n_2 = \kappa_m X; \\
\vspace{2mm}
\overline{\nabla}_Y Y = \frac{f'}{f}X + \frac{\kappa}{f}n_1- \frac{g'}{f}n_2; \quad & \overline{\nabla}_Y n_2=\frac{g'}{f}Y,
\end{array}
\end{equation}
where  $\kappa_m$ is the curvature of the meridian curve $m$, which in the case of a spacelike curve is given by the formula
$\kappa_m =  f' g'' - f'' g'$.

Equalities \eqref{E:Eq-3-b} imply that the mean curvature vector field is given  by the formula:
\begin{equation*}
H = -\frac{\kappa}{2f} \,n_1 + \frac{f \kappa_m +g'}{2f} \,n_2.
\end{equation*}
Having in mind that $g'^2 = f'^2 - 1$ and $\kappa_m = \frac{f''}{g'}$, we obtain
\begin{equation} \label{E:Eq-Hb}
H=-\frac{\kappa}{2f} \,n_1 + \frac{f f''+(f')^2 - 1}{2f \sqrt{f'^2-1}} \,n_2.
\end{equation}

So, we have two types of meridian surfaces lying on  $\mathcal{M}^I$:   meridian surfaces of type $\mathcal{M}'_a$ and  meridian surfaces of type $\mathcal{M}'_b$.

\vskip 2mm
\textbf{Rotational hypersurface of second type.}

Now, we consider the second type rotational
hypersurface $\mathcal{M}^{II}$ in $\E^4_2$, obtained by the rotation of the
meridian curve  $m:u\rightarrow (f(u),g(u))$  about the axis $Oe_4$, which is given by the following
parametrization:
\begin{equation*}
\mathcal{M}^{II}:Z(u,w^1,w^2) = f(u)\left(\sinh w^1 \cos w^2 \,e_1 +  \sinh w^1 \sin w^2 \,e_2 + \cosh w^1 \,e_3\right) + g(u) \,e_4.
\end{equation*}
Note that  $l^{II}(w^1,w^2) = \sinh w^1 \cos w^2 \,e_1 +  \sinh w^1 \sin w^2 \,e_2 + \cosh w^1 \,e_3$  is the unit position vector of the hyperbolic sphere
$\mathbb{H}^2_1(-1)$ in $\E^3_1 = \span\left \{ e_{1},e_{2},e_{3}\right \}$ centered at the origin $O$. So,  the parametrization of
$\mathcal{M}^{II}$ can be written as:
\begin{equation*}
\mathcal{M}^{II}:Z(u,w^1,w^2) = f(u) l^{II}(w^1,w^2) + g(u) \,e_4.
\end{equation*}

Meridian surfaces lying on the rotational hypersurface of second type $\mathcal{M}^{II}$ can be constructed as follows. Let $c: l = l(v) =  l^{II}(w^1(v),w^2(v))$ be a smooth curve on the hyperbolic sphere $\mathbb{H}^2_1(-1)$, where $w^1 = w^1(v)$, $w^2=w^2(v), \,\, v \in J, \, J \subset \R$.  Then the  two-dimensional surface $\mathcal{M}''$  defined by
\begin{equation}  \label{E:Eq-4}
\mathcal{M}'': z(u,v) = f(u)\, l(v) + g(u) \,e_4, \quad u \in I, \, v \in J
\end{equation}
 is a  one-parameter system of meridians of $\mathcal{M}^{II}$, which we call
a  \emph{meridian surface on $\mathcal{M}^{II}$}.

The tangent space of $\mathcal{M}''$ is spanned by the vector fields
$$z_u = f'(u) \,l(v) + g'(u) \,e_4; \quad  z_v = f(u) \,l'(v),$$
so the coefficients of the first fundamental form of $\mathcal{M}''$  are
$$E = \langle z_u, z_u \rangle =  -(f'^2 + g'^2); \quad F = \langle z_u, z_v \rangle = 0; \quad  G = \langle z_v, z_v \rangle = f^2 \langle l',l'\rangle.$$
Since $c$ is a curve lying on $\mathbb{H}^2_1(-1)$, we have $\langle l,l\rangle = -1$, which implies that the tangent vector field $t = l'$ satisfies $\langle t, t \rangle = 1$. Without loss of generality we suppose that $f'^2 + g'^2 =1$. Then the coefficients of the first fundamental form of $\mathcal{M}''$  are $E = -1; \, F =  0; \,  G = f^2$. 

We consider an  orthonormal frame field $\{l(v), t(v), n(v)\}$ of $c$ satisfying the conditions
$\langle l, l \rangle = -1$, $\langle t, t \rangle = 1$,
$\langle n, n \rangle = 1$. The Frenet formulas of $c$ on $\mathbb{H}^2_1(-1)$ are:
\begin{equation} \label{E:Eq-2-c}
\begin{array}{l}
\vspace{2mm}
l' = t;\\
\vspace{2mm}
t' =  \kappa \,n + l;\\
\vspace{2mm} n' = - \kappa \,t,
\end{array}
\end{equation}
 where $\kappa (v)= \langle t'(v), n(v) \rangle$ is the curvature of $c$ on  $\mathbb{H}^2_1(-1)$.

Let us consider the following orthonormal frame field of  $\mathcal{M}''$:
$$X = z_u; \quad Y = \frac{z_v}{f} = t; \quad n_1 = n(v); \quad n_2 = -g'(u)\,l(v) + f'(u) \, e_4.$$
This frame field satisfies $\langle X, X \rangle = -1$,
$\langle Y, Y \rangle = 1$, $\langle X, Y \rangle =0$, $\langle n_1, n_1 \rangle = 1$,
$\langle n_2, n_2 \rangle = -1$, $\langle n_1, n_2 \rangle =0$.
Taking into account  \eqref{E:Eq-2-c} we get the following derivative formulas:
\begin{equation} \notag
\begin{array}{ll}
\vspace{2mm}
\overline{\nabla}_X X = \kappa_m n_2; \qquad & \overline{\nabla}_X n_1 = 0; \\
\vspace{2mm}
\overline{\nabla}_X Y = 0; \quad & \overline{\nabla}_Y n_1 = -\frac{\kappa}{f}Y; \\
\vspace{2mm}
\overline{\nabla}_Y X = \frac{f'}{f}Y; \quad & \overline{\nabla}_X n_2 = -\kappa_m X; \\
\vspace{2mm}
\overline{\nabla}_Y Y = \frac{f'}{f}X + \frac{\kappa}{f}n_1 - \frac{g'}{f}n_2; \quad & \overline{\nabla}_Y n_2= -\frac{g'}{f}Y,
\end{array}
\end{equation}
where $\kappa_m = f' g'' -  f'' g'$. 
Hence, we obtain the formulas
\begin{equation*} 
\begin{array}{l}
\vspace{2mm}
h(X,X) = \kappa_m\,n_2; \\
\vspace{2mm}
h(X,Y) =  0; \\
\vspace{2mm}
h(Y,Y) =  \frac{\kappa}{f}\,n_1 - \frac{g'}{f} \, n_2,
\end{array}
\end{equation*}
which imply that the normal mean curvature vector field of $\mathcal{M}''$ is given by
\begin{equation} \notag
H = \frac{\kappa}{2f} \,n_1 + \frac{f f''+(f')^2 - 1}{2f \sqrt{1-f'^2}} \,n_2.
\end{equation}

Note that we can construct only one type of Lorentz meridian surfaces lying on the rotational hypersurface  $\mathcal{M}^{II}$.

\subsection{Meridian surfaces lying on a rotational hypersurface with spacelike axis}

In a similar way, we can construct meridian surfaces lying on a rotational
hypersurface  with spacelike axis  $Oe_1$ (or $Oe_2$). 

In the Minkowski space $\E^3_2 = \span\left \{ e_{2},e_{3},e_{4}\right \}$  there exist two types of spheres, namely the  de Sitter space $\mathbb{S}^2_2(1) =\left\{V\in \E^3_2: \langle V, V \rangle = 1\right \}$,  and the hyperbolic space $\mathbb{H}^2_1(-1)=\left\{V \in  \E^3_2: \langle V, V \rangle = - 1\right \}$.  So, we can consider two types of rotational hypersurfaces about the axis  $Oe_1$.

\vskip 2mm
\textbf{Rotational hypersurface of first type.}

Let  $f=f(u)$, $g=g(u)$ be smooth functions, defined in an interval $I\subset \R$. We denote by 
$\tilde{l}^I(w^1,w^2) = \cosh w^1 \,e_2 +   \sinh w^1 \cos w^2 \,e_3 + \sinh w^1 \sin w^2 \,e_4$  the unit position vector of the sphere
$\mathbb{S}^2_2(1)$ in $\E^3_2 = \span\left \{ e_{2},e_{3},e_{4}\right \}$ centered at the origin $O$.
Then, the first type rotational
hypersurface $\widetilde{\mathcal{M}}^I$, obtained by the rotation of the
meridian curve $m:u\rightarrow (f(u),g(u))$ about the axis $Oe_1$, is
parametrized by
\begin{equation*}
\widetilde{\mathcal{M}}^I:Z(u,w^1,w^2) = g(u) \,e_1 + f(u)\left(\cosh w^1 \,e_2 +   \sinh w^1 \cos w^2 \,e_3 + \sinh w^1 \sin w^2 \,e_4\right),
\end{equation*}
or equivalently,
\begin{equation*}
\widetilde{\mathcal{M}}^I:Z(u,w^1,w^2) =  g(u) \,e_1 + f(u) \,\tilde{l}^I(w^1,w^2).
\end{equation*}

Lorentz meridian surfaces lying on  $\widetilde{\mathcal{M}}^I$ are  one-parameter systems of meridians of $\widetilde{\mathcal{M}}^I$. They can be constructed as follows.
Let  $c: l = l(v) =  \tilde{l}^I(w^1(v),w^2(v)), \, v \in J, \, J \subset \R$ be a smooth curve on  $\mathbb{S}^2_2(1)$.
We consider the two-dimensional
 surface $\widetilde{\mathcal{M}}'$  defined by:
\begin{equation}  \notag
\widetilde{\mathcal{M}}': z(u,v) = g(u) \,e_1 + f(u)\, l(v), \quad u \in I, \, v \in J.
\end{equation}
It is a one-parameter system of meridians of $\widetilde{\mathcal{M}}^I$, so we call $\widetilde{\mathcal{M}}'$  a  \emph{meridian surface on $\widetilde{\mathcal{M}}^I$}.

It can easily be seen that the meridian surface 
 $\mathcal{M}''$, defined by \eqref{E:Eq-4}, can be transformed into the surface $\widetilde{\mathcal{M}}'$ by the transformation $T$ given by the matrix
\begin{equation} \label{E:Eq-T}
T = \left(\begin{array}{cccc}
\vspace{2mm}
0 & 0& 0&1\\
\vspace{2mm}
0 & 0& 1&0\\
\vspace{2mm}
1 & 0& 0&0\\
\vspace{2mm}
 0& 1& 0&0\\
\end{array}\right).
\end{equation}
So, the meridian surface  $\mathcal{M}''$ lying on $\mathcal{M}^{II}$  and the meridian surface $\widetilde{\mathcal{M}}'$ lying on $\widetilde{\mathcal{M}}^I$ are congruent. Hence, all results concerning the surface  $\mathcal{M}''$  hold true for the surface $\widetilde{\mathcal{M}}'$.

\vskip 2mm
\textbf{Rotational hypersurface of second type.}

The second type rotational hypersurface $\widetilde{\mathcal{M}}^{II}$, obtained by the rotation of the
meridian curve $m$ about $Oe_1$, is given by the following parametrization:
\begin{equation*}
\widetilde{\mathcal{M}}^{II}:Z(u,w^1,w^2) = g(u) \,e_1 + f(u)\left(\sinh w^1 \,e_2 + \cosh w^1 \cos w^2 \,e_3 + \cosh w^1 \sin w^2 \,e_4\right).
\end{equation*}
Here, $\tilde{l}^{II}(w^1,w^2) = \sinh w^1 \,e_2 + \cosh w^1 \cos w^2 \,e_3 + \cosh w^1 \sin w^2 \,e_4$ is the unit position vector of the hyperbolic sphere
$\mathbb{H}^2_1(-1)$ in $\E^3_2 = \span\left \{ e_{2},e_{3},e_{4}\right \}$ centered at the origin $O$. So, the parametrization of $\widetilde{\mathcal{M}}^{II}$ can be written as
\begin{equation*}
\widetilde{\mathcal{M}}^{II}:Z(u,w^1,w^2) =  g(u) \,e_1 + f(u) \,\tilde{l}^{II}(w^1,w^2).
\end{equation*}

We can construct two types of meridian surfaces lying on the second type rotational hypersurface  $\widetilde{\mathcal{M}}^{II}$. 

First, we consider a smooth spacelike curve $c: l = l(v) =  \tilde{l}^{II}(w^1(v),w^2(v)), \, v \in J, \, J \subset \R$ lying on the hyperbolic sphere $\mathbb{H}^2_1(-1)$ in $\E^3_2$. We assume that $c$ is parametrized by the arc-length, i.e.   $\langle l',l'\rangle =1$. Let $\widetilde{\mathcal{M}}''_a$ be the surface lying on $\widetilde{\mathcal{M}}^{II}$ and defined by:
\begin{equation}  \notag
\widetilde{\mathcal{M}}''_a: z(u,v) = g(u) \,e_1 + f(u)\, l(v), \quad u \in I, \, v \in J.
\end{equation}
The tangent space of $\widetilde{\mathcal{M}}''_a$  is spanned by the vector fields
$$z_u = g'(u) \,e_1 + f'(u) \,l(v); \qquad  z_v = f(u) \,l'(v),$$
so, the coefficients of the first fundamental form  are
$$E = g'^2  -  f'^2; \quad F = 0; \quad  G = f^2.$$
Since we are interested in Lorentz surfaces,  we assume that  $f'^2 - g'^2 = 1$. 
Then the coefficients of the first fundamental form are $E = -1; \, F = 0; \, G = f^2$.
It is easy to see that under the tranformation $T$ given by \eqref{E:Eq-T} the surface $\mathcal{M}'_b$ is transformed into the surface $\widetilde{\mathcal{M}}''_a$. So, all results concerning the surface   $\mathcal{M}'_b$  hold true for the surface $\widetilde{\mathcal{M}}''_a$.

Second, we consider a  timelike curve $c: l = l(v) =  \tilde{l}^{II}(w^1(v),w^2(v)), \, v \in J, \, J \subset \R$ lying on the hyperbolic sphere $\mathbb{H}^2_1(-1)$ and parametrized by the arc-length, i.e.  $\langle l',l'\rangle = -1$. 
Then the surface $\widetilde{\mathcal{M}}''_b$ defined by:
\begin{equation}  \notag
\widetilde{\mathcal{M}}''_b: z(u,v) = g(u) \,e_1 + f(u)\, l(v), \quad u \in I, \, v \in J,
\end{equation}
where $f'^2 - g'^2 = -1$ is a Lorentz meridian surface lying on $\widetilde{\mathcal{M}}^{II}$.
It is clear that the meridian surfaces $\mathcal{M}'_a$ and 
$\widetilde{\mathcal{M}}''_b$ are congruent up to the transformation $T$.

\vskip 2mm
So, it is worth studying three types of Lorentz meridian surfaces in $\E^4_2$, namely the surfaces denoted by $\mathcal{M}'_a$, $\mathcal{M}'_b$, and $\mathcal{M}''$.

\section{Minimal Meridian Surfaces  in $\E^4_2$}

In this section we give the classification of all minimal meridian surfaces in $\E^4_2$.

\begin{theorem} \label{T:minimal-a}
Let  $\mathcal{M}'_a$ be a meridian surface on $\mathcal{M}^I$ defined by \eqref{E:Eq-1-a}. Then
$\mathcal{M}'_a$ is minimal if and only if the curve $c$   has zero spherical curvature and the meridian curve $m$ is given by 
$$f(u) = \pm \sqrt{-u^2 +2au+b}, \quad g(u) = \pm \sqrt{a^2+b} \arcsin \frac{u-a}{\sqrt{a^2+b}} + c,$$ 
where $a = const$, $b=const$, $c = const$. 
\end{theorem}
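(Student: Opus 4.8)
The plan is to read off the condition $H=0$ directly from the explicit expression \eqref{E:Eq-Ha}. Since $\{n_1,n_2\}$ is an orthonormal basis of the normal bundle (with $\langle n_1,n_1\rangle=-1$, $\langle n_2,n_2\rangle=1$, $\langle n_1,n_2\rangle=0$), the mean curvature vector vanishes identically if and only if both of its components vanish. Because $f\neq 0$ and $\sqrt{f'^2+1}>0$ everywhere on $I$, this is equivalent to the two conditions
\begin{equation*}
\kappa\equiv 0, \qquad f f''+(f')^2+1=0.
\end{equation*}
The first condition says precisely that the curve $c$ has zero spherical curvature on $\mathbb{S}^2_1(1)$, which is one half of the claim; it remains to integrate the second one.

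Next I would solve the ODE $f f''+(f')^2+1=0$ for $f$. The key observation is that $f f''+(f')^2=(f f')'=\frac{1}{2}(f^2)''$, so the equation becomes $(f^2)''=-2$. Integrating twice gives $f^2=-u^2+2au+b$ for constants $a,b$, i.e. $f(u)=\pm\sqrt{-u^2+2au+b}$, as asserted. To recover $g$, I would use the relation $g'^2=f'^2+1$ coming from the normalization $f'^2-g'^2=-1$. Differentiating $f^2=-u^2+2au+b$ gives $f f'=a-u$, hence $f'^2=(u-a)^2/f^2$, and since $f^2=(a^2+b)-(u-a)^2$ one obtains $g'^2=f'^2+1=(a^2+b)\big/\big((a^2+b)-(u-a)^2\big)$. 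Thus $g'=\pm\sqrt{a^2+b}\big/\sqrt{(a^2+b)-(u-a)^2}$, and integrating yields $g(u)=\pm\sqrt{a^2+b}\,\arcsin\frac{u-a}{\sqrt{a^2+b}}+c$.

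For the converse, I would simply check that if $c$ has zero spherical curvature and $f,g$ are given by these formulas, then they satisfy $f'^2-g'^2=-1$ (equivalently $g'^2=f'^2+1$, which is how $g$ was constructed), and substituting into \eqref{E:Eq-Ha} gives $H=0$: the $n_1$-component vanishes since $\kappa=0$, and the $n_2$-component vanishes since $f f''+(f')^2+1=\frac{1}{2}(f^2)''+1=0$.

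There is no genuine obstacle here: the argument is an elementary reduction to a second-order ODE that integrates in closed form. The only points requiring mild care are the sign ambiguities (the $\pm$ signs in $f$ and $g$ and the choice of branch for $\arcsin$) and the tacit restriction of the domain $I$ so that $-u^2+2au+b>0$ and $a^2+b>0$, which is what makes $f$ and the $\arcsin$ well defined.
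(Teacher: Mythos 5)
Your proposal is correct and follows essentially the same route as the paper: read off from \eqref{E:Eq-Ha} that minimality is equivalent to $\kappa=0$ together with $ff''+(f')^2+1=0$, then integrate. The only difference is that you make explicit the integration step $(f^2)''=-2$ that the paper leaves implicit when it simply states the solution $f=\pm\sqrt{-u^2+2au+b}$.
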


\begin{proof}
The mean curvature vector field $H$ of the meridian surface $\mathcal{M}'_a$ is given by formula  \eqref{E:Eq-Ha}. Hence, $\mathcal{M}'_a$ is minimal if and only if the curvature of $c$ is $\kappa = 0$ and the function $f(u)$ satisfies the following equation 
$$f f''+(f')^2 + 1 =0.$$
The solutions of this differential equation are given by the formula 
$f(u) = \pm \sqrt{-u^2 +2au+b}$, where $a=const$, $b =const$. Having in mind that $g' = \sqrt{f'^2 + 1}$, we get the following equation for $g(u)$:
$$g' = \pm \frac{\sqrt{a^2+b}}{\sqrt{-u^2 +2au+b}}.$$
Integrating the above equation we obtain
$$g(u) = \pm \sqrt{a^2+b} \arcsin \frac{u-a}{\sqrt{a^2+b}} + c, \quad c = const.$$

\end{proof}

Note that if $\mathcal{M}'_a$ is minimal, then $\kappa=0$ and  from \eqref{E:Eq-3-0} we get that $\overline{\nabla}_X n_1 = 0, \; \overline{\nabla}_Y n_1 = 0$. This  means that the normal vector field $n_1$ is constant. Hence,  the surface $\mathcal{M}'_a$  lies in the constant 3-dimensional space $\E^3_1 = \span \{X,Y,n_2\}$. Consequently, $\mathcal{M}'_a$ lies in a hyperplane of $\E^4_2$.

\begin{theorem} \label{T:minimal-b}
Let  $\mathcal{M}'_b$ be a meridian surface on $\mathcal{M}^I$ defined by \eqref{E:Eq-1-b}. Then
$\mathcal{M}'_b$ is minimal if and only if the curve $c$   has zero spherical curvature and the meridian curve $m$ is given by 
$$f(u) = \pm \sqrt{u^2 +2au+b}, \quad g(u) = \pm \sqrt{a^2-b} \ln |u+a + \sqrt{u^2 +2au+b}| + c,$$ 
where $a = const$, $b=const$, $c = const$, $a^2-b>0$.
\end{theorem}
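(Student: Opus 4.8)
The plan is to mirror the proof of Theorem~\ref{T:minimal-a}, using the explicit expression \eqref{E:Eq-Hb} for the mean curvature vector field of $\mathcal{M}'_b$. First I would note that since $n_1$ and $n_2$ are linearly independent normal vector fields, the condition $H = 0$ splits into the two scalar equations
\begin{equation*}
\frac{\kappa}{2f} = 0, \qquad \frac{f f'' + (f')^2 - 1}{2f\sqrt{f'^2 - 1}} = 0.
\end{equation*}
The first gives $\kappa \equiv 0$, i.e. the curve $c$ has zero spherical curvature on $\mathbb{S}^2_1(1)$. The second reduces, away from the points where $f = 0$, to the ODE
\begin{equation*}
f f'' + (f')^2 - 1 = 0.
\end{equation*}

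The main step is to integrate this ODE. I would rewrite it as $(f f')' = (f f'')+(f')^2 = 1$, so that $f f' = u + a$ for some constant $a$, hence $\left(f^2\right)' = 2(u+a)$ and $f^2 = u^2 + 2au + b$ for a second constant $b$; this yields $f(u) = \pm\sqrt{u^2 + 2au + b}$. Then, using the relation $g'^2 = f'^2 - 1$ valid for $\mathcal{M}'_b$ (so that $g' = \pm\sqrt{f'^2-1}$), I would compute $f' = \pm (u+a)/\sqrt{u^2+2au+b}$, so
\begin{equation*}
(f')^2 - 1 = \frac{(u+a)^2 - (u^2 + 2au + b)}{u^2 + 2au + b} = \frac{a^2 - b}{u^2 + 2au + b},
\end{equation*}
which forces $a^2 - b > 0$ for $g$ to be real, and gives $g' = \pm\sqrt{a^2-b}\big/\sqrt{u^2+2au+b}$. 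Integrating, since $u^2 + 2au + b = (u+a)^2 - (a^2 - b)$, produces the logarithmic antiderivative $g(u) = \pm\sqrt{a^2-b}\,\ln|u + a + \sqrt{u^2 + 2au + b}| + c$. For the converse direction one simply substitutes these $f$, $g$ and $\kappa = 0$ back into \eqref{E:Eq-Hb} to confirm $H = 0$.

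The only real obstacle, and it is a minor one, is bookkeeping with the signs and the domain: one must check that $f'^2 - 1 \geq 0$ on the relevant interval so that the surface $\mathcal{M}'_b$ (which requires $f'^2 - g'^2 = 1$ with a timelike curve $c$ on the sphere) is genuinely defined, and that $u^2 + 2au + b > 0$ there; the condition $a^2 - b > 0$ in the statement is exactly what makes the discriminant analysis work out and the logarithm well-defined. I expect the write-up to be essentially as short as the proof of Theorem~\ref{T:minimal-a}, since the ODE is of the same type and the integration is elementary.
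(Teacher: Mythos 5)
Your proposal is correct and follows essentially the same route as the paper: read off from \eqref{E:Eq-Hb} that minimality is equivalent to $\kappa=0$ together with $ff''+(f')^2-1=0$, solve for $f$, and then integrate $g'=\sqrt{f'^2-1}$. The only difference is that you spell out the integration of the ODE via $(ff')'=1$, which the paper omits; the conclusion and the role of the condition $a^2-b>0$ match.
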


\begin{proof}
Using that the mean curvature vector field $H$ of $\mathcal{M}'_b$ is given by formula  \eqref{E:Eq-Hb}, we get that $\mathcal{M}'_b$ is minimal if and only if $\kappa = 0$ and the function $f(u)$ satisfies the equation 
$$f f''+(f')^2 -1 =0.$$
The solutions of this differential equation are given by the formula 
$f(u) = \pm \sqrt{u^2 +2au+b}$, where $a=const$, $b =const$. Using that $g' = \sqrt{f'^2 - 1}$, we get the following equation for $g(u)$:
$$g' = \pm \frac{\sqrt{a^2-b}}{\sqrt{u^2 +2au+b}}.$$
Integrating the last equation we obtain
$$g(u) = \pm \sqrt{a^2-b} \ln |u+a + \sqrt{u^2 +2au+b}| + c, \quad c = const.$$

\end{proof}

Note that if $\mathcal{M}'_b$ is minimal, we have $\kappa=0$ and again we obtain  $\overline{\nabla}_X n_1 = 0, \;\overline{\nabla}_Y n_1 = 0$, i.e.  the normal vector field $n_1$ is constant. In this case the surface $\mathcal{M}'_b$  lies in the constant 3-dimensional space $\E^3_2 = \span \{X,Y,n_2\}$. Consequently, $\mathcal{M}'_b$ lies in a hyperplane of $\E^4_2$.

\begin{theorem} \label{T:minimal-c}
Let  $\mathcal{M}''$ be a meridian surface on $\mathcal{M}^{II}$ defined by \eqref{E:Eq-4}. Then
  $\mathcal{M}''$ is minimal if and only if the curve $c$   has zero spherical curvature and the meridian curve $m$ is given by 
$$f(u) = \pm \sqrt{u^2 +2au+b}, \quad g(u) = \pm \sqrt{b -a^2} \ln |u+a + \sqrt{u^2 +2au+b}| + c,$$ 
where $a = const$, $b=const$, $c = const$, $b -a^2>0$.
\end{theorem}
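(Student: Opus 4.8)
The plan is to read off the minimality condition directly from the expression for the mean curvature vector field of $\mathcal{M}''$ computed in the previous section, and then integrate the resulting ordinary differential equations for the profile functions $f$ and $g$, exactly as in the proofs of Theorems \ref{T:minimal-a} and \ref{T:minimal-b}.

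First I would recall that for the surface $\mathcal{M}''$ the mean curvature vector field is
$$H = \frac{\kappa}{2f}\,n_1 + \frac{f f'' + (f')^2 - 1}{2f\sqrt{1-f'^2}}\,n_2,$$
where $\{n_1,n_2\}$ is the normal frame with $\langle n_1,n_1\rangle = 1$, $\langle n_2,n_2\rangle = -1$, $\langle n_1,n_2\rangle = 0$. Since $n_1$ and $n_2$ are linearly independent, $H$ vanishes identically if and only if the two coefficients vanish, i.e. $\kappa(v) \equiv 0$ (the generating curve $c$ has zero spherical curvature on $\mathbb{H}^2_1(-1)$) and $f$ satisfies $f f'' + (f')^2 - 1 = 0$.

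Next I would integrate this ODE. Observing that $f f'' + (f')^2 = (ff')'$, the equation becomes $(ff')' = 1$, hence $ff' = u + a$ for a constant $a$, and integrating once more $f^2 = u^2 + 2au + b$ for a constant $b$, so $f(u) = \pm\sqrt{u^2+2au+b}$. From $f' = \pm(u+a)/\sqrt{u^2+2au+b}$ one computes $1 - (f')^2 = (b-a^2)/(u^2+2au+b)$, and since the meridian of $\mathcal{M}^{II}$ is parametrized so that $f'^2 + g'^2 = 1$, this forces $g'^2 = (b-a^2)/(u^2+2au+b)$; in particular one needs $b - a^2 > 0$. Finally, integrating
$$g' = \pm\frac{\sqrt{b-a^2}}{\sqrt{(u+a)^2 + (b-a^2)}}$$
yields $g(u) = \pm\sqrt{b-a^2}\,\ln|u+a+\sqrt{u^2+2au+b}| + c$ with $c$ a constant, which is the asserted formula.

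There is no serious obstacle here: the argument is a direct analogue of the two preceding theorems, the only points requiring care being the correct sign under the square root in the $n_2$-component of $H$ (here $\sqrt{1-f'^2}$, reflecting $f'^2 + g'^2 = 1$ for the spacelike meridian on $\mathcal{M}^{II}$) and the consequent positivity condition $b - a^2 > 0$, needed both for $g'$ to be real and for the logarithm to be well defined. As in the previous cases, one may further remark that $\kappa = 0$ forces $n_1$ to be a constant vector field, so a minimal $\mathcal{M}''$ automatically lies in a hyperplane of $\E^4_2$.
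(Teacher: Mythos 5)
Your proposal is correct and follows exactly the route the paper intends: the paper omits the proof of Theorem \ref{T:minimal-c}, stating only that it is analogous to Theorems \ref{T:minimal-a} and \ref{T:minimal-b}, and your argument is precisely that analogue (read off $\kappa=0$ and $ff''+(f')^2-1=0$ from the formula for $H$, integrate $(ff')'=1$, then integrate $g'=\sqrt{1-f'^2}$). Your additional remarks on the sign of $1-f'^2$, the necessity of $b-a^2>0$, and the hyperplane consequence of $\kappa=0$ are all accurate.
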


The proof is similar to the proof of the previous two theorems. Again we have that if  $\mathcal{M}''$ is minimal, then it lies in a constant 3-dimensional hyperplane $\E^3_2$ of $\E^4_2$.

\vskip 2mm
Finally, we can formulate the following

\begin{corollary}
There are no minimal meridian surfaces lying fully in $\E^4_2$.

\end{corollary}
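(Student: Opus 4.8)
The plan is to combine the three classification theorems above with the observations recorded immediately after each of them, so that the argument is short. Recall from Section~3 that, up to congruence, there are exactly three types of Lorentz meridian surfaces in $\E^4_2$: the surfaces $\mathcal{M}'_a$, $\mathcal{M}'_b$, and $\mathcal{M}''$. Indeed, every meridian surface lying on a rotational hypersurface with spacelike axis is carried by the transformation $T$ from \eqref{E:Eq-T} onto one of these three. Hence it suffices to check that each of $\mathcal{M}'_a$, $\mathcal{M}'_b$, $\mathcal{M}''$, when minimal, is contained in a hyperplane of $\E^4_2$ (here ``lying fully in $\E^4_2$'' is understood as not being contained in any hyperplane).

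First I would use the explicit expressions for the mean curvature vector field, namely \eqref{E:Eq-Ha} for $\mathcal{M}'_a$, \eqref{E:Eq-Hb} for $\mathcal{M}'_b$, and the corresponding formula for $\mathcal{M}''$. In each case $\{n_1,n_2\}$ is a basis of the normal space, so $H=0$ forces in particular the coefficient of $n_1$ to vanish, i.e. $\kappa=0$; this is precisely the first condition appearing in Theorems~\ref{T:minimal-a}, \ref{T:minimal-b}, and \ref{T:minimal-c}. Next I would substitute $\kappa=0$ into the derivative formulas \eqref{E:Eq-3-0}, \eqref{E:Eq-3-b} and their analogue for $\mathcal{M}''$; this yields $\overline{\nabla}_X n_1=0$ and $\overline{\nabla}_Y n_1=0$, so the unit normal field $n_1$ is a constant vector of $\E^4_2$. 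Since $z_u$ and $z_v$ are tangent while $n_1$ is normal, differentiation of $\langle z(u,v),n_1\rangle$ gives $\langle z_u,n_1\rangle=\langle z_v,n_1\rangle=0$, whence $\langle z,n_1\rangle=const$ on the surface. Therefore the surface lies in the hyperplane $\{p\in\E^4_2:\langle p,n_1\rangle=const\}$, a $3$-dimensional subspace isometric to $\E^3_1$ in the case $\mathcal{M}'_a$ and to $\E^3_2$ in the cases $\mathcal{M}'_b$ and $\mathcal{M}''$, exactly as noted after each theorem. Since every minimal meridian surface is congruent to one of these, it lies in a hyperplane of $\E^4_2$, and therefore no minimal meridian surface lies fully in $\E^4_2$.

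There is no genuine obstacle here: the content has already been extracted in the classification theorems and the subsequent remarks, and what remains is purely bookkeeping. The only point calling for a little care is to confirm that the congruence reductions of Section~3 really do exhaust all meridian surfaces, so that the three model cases $\mathcal{M}'_a$, $\mathcal{M}'_b$, $\mathcal{M}''$ indeed suffice; once that is granted, each case has been settled in the remarks following Theorems~\ref{T:minimal-a}--\ref{T:minimal-c}.
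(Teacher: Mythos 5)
Your argument is correct and is essentially the paper's own: the authors likewise reduce to the three model types $\mathcal{M}'_a$, $\mathcal{M}'_b$, $\mathcal{M}''$ via the congruence $T$, and in the remarks following Theorems \ref{T:minimal-a}--\ref{T:minimal-c} they observe that minimality forces $\kappa=0$, hence $\overline{\nabla}_X n_1=\overline{\nabla}_Y n_1=0$, so $n_1$ is constant and the surface lies in a hyperplane. No substantive difference.
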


\section{Quasi-Minimal Meridian Surfaces  in $\E^4_2$}

In this section we classify all  quasi-minimal meridian surfaces in the pseudo-Euclidean 4-space $\E^4
_2$.

\begin{theorem} \label{T:quasi-minimal-a}
Let  $\mathcal{M}'_a$ be a meridian surface on $\mathcal{M}^I$ defined by \eqref{E:Eq-1-a}. Then
$\mathcal{M}'_a$ is quasi-minimal if and only if the curve $c$   has  constant curvature $\kappa = a = const,  \, a \neq 0$ and the meridian curve $m$ is determined by $f' = \varphi(f)$ where
\begin{equation} \notag
\varphi(t) = \pm \frac{1}{t} \sqrt{(\pm a t+ c)^2 -t^2},  \quad c = const,
\end{equation}
$g(u)$ is defined by $g' = \sqrt{f'^2+1}$. 
\end{theorem}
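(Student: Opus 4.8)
The plan is to turn the quasi-minimality condition into a scalar ODE by using the explicit mean curvature vector \eqref{E:Eq-Ha}. Since $\{n_1,n_2\}$ is orthonormal with $\langle n_1,n_1\rangle=-1$ and $\langle n_2,n_2\rangle=1$, formula \eqref{E:Eq-Ha} gives
\begin{equation*}
\langle H,H\rangle=-\frac{\kappa^2}{4f^2}+\frac{\bigl(ff''+(f')^2+1\bigr)^2}{4f^2(f'^2+1)} .
\end{equation*}
So $\langle H,H\rangle=0$ is equivalent to $\kappa^2(f'^2+1)=\bigl(ff''+(f')^2+1\bigr)^2$, and because the $n_1$-component of $H$ equals $-\kappa/(2f)$, the vector $H$ is automatically nonzero as soon as $\kappa\neq 0$. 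Hence $\mathcal{M}'_a$ is quasi-minimal exactly when this scalar equation holds together with $\kappa\not\equiv 0$.

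Next I would exploit the separation of variables. In $\kappa^2(f'^2+1)=\bigl(ff''+(f')^2+1\bigr)^2$ the left factor $\kappa=\kappa(v)$ depends only on $v$ while the rest depends only on $u$; since $f'^2+1\geq 1>0$, this forces $\kappa^2$ to be a constant, so $\kappa\equiv a$ for some constant $a$. If $a=0$ the equation becomes $ff''+(f')^2+1=0$, which by \eqref{E:Eq-Ha} gives $H=0$, i.e. the minimal case already treated in Theorem \ref{T:minimal-a}; thus $a\neq 0$. It then remains to integrate
\begin{equation*}
\pm a\sqrt{f'^2+1}=ff''+(f')^2+1 .
\end{equation*}

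Integrating this second-order ODE is the technical core of the argument. Following the usual reduction for equations of this type I would treat $f'$ as a function of $f$, writing $f'=\varphi(f)$, so that $f''=\varphi\,\varphi'$ and $ff''=\tfrac{t}{2}(\varphi^2)'$, where $t=f$ and $'$ now denotes $d/dt$. Setting $s=s(t):=\sqrt{\varphi^2+1}=\sqrt{f'^2+1}\geq 1$, one has $(\varphi^2)'=2ss'$ and the equation collapses to $\pm a\,s=s(ts'+s)=s\,(ts)'$; dividing by $s>0$ gives $(ts)'=\pm a$, hence $ts=\pm at+c$ with $c$ a constant, i.e. $s=(\pm at+c)/t$. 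Therefore $\varphi(t)^2=s^2-1=\bigl((\pm at+c)^2-t^2\bigr)/t^2$, which is precisely $\varphi(t)=\pm\frac1t\sqrt{(\pm at+c)^2-t^2}$, and $g$ is recovered from the Lorentz condition $f'^2-g'^2=-1$, i.e. $g'=\sqrt{f'^2+1}$. The converse direction is a direct substitution: a spherical curve $c$ of constant curvature $a\neq 0$ and a meridian with $f'=\varphi(f)$ make $ff''+(f')^2+1=\pm a\sqrt{f'^2+1}$, so $\langle H,H\rangle=0$, while $\kappa=a\neq 0$ keeps $H\neq 0$. I expect the only delicate points to be the consistent bookkeeping of the several $\pm$ signs (all of which survive into the final expression for $\varphi$) and the remark that dividing by $s$ is legitimate because $s\geq 1$.
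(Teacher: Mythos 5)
Your proposal is correct and follows essentially the same route as the paper: read off $\langle H,H\rangle$ from \eqref{E:Eq-Ha}, separate variables to force $\kappa=a=const\neq0$, reduce the resulting second-order ODE via $f'=\varphi(f)$ and the substitution $s=\sqrt{\varphi^2+1}$ to a first-order equation whose solution is $ts=\pm at+c$. The only (cosmetic) difference is that you integrate $(ts)'=\pm a$ directly, whereas the paper writes the same equation as the linear ODE $z'+z/t=\pm a/t$ and quotes its general solution.
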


\begin{proof}
Using formula  \eqref{E:Eq-Ha} for the mean curvature vector field $H$ of the meridian surface $\mathcal{M}'_a$, we get that $\mathcal{M}'_a$ is quasi-minimal, i.e. $H \neq 0$ and $\langle H, H \rangle =0$, if and only if 
$$\frac{\left(f f''+(f')^2 + 1\right)^2 }{f'^2 +1}= \kappa^2, \quad \kappa \neq 0.$$
The left-hand side of this equation is a function of $u$, the right-hand side of the equation is a function of $v$. Hence, we obtain
\begin{equation*} 
\begin{array}{l}
\vspace{2mm}
\kappa = a, \quad a = const \neq 0; \\
\vspace{2mm}
\left(f f''+(f')^2 + 1\right)^2 = a^2 (f'^2 +1).
\end{array}
\end{equation*}
So, the meridian $m$ is determined by the following differential
equation:
\begin{equation} \label{E:Eq-5}
f f''+(f')^2 + 1= \pm  a \sqrt{f'^2 + 1}.
\end{equation}
The solutions of the above  differential equation  can be found in the  following way.
If we set $f' = \varphi (f)$ in equation \eqref{E:Eq-5},  we  obtain
that the function $\varphi  = \varphi (t)$ is a solution of the equation:
\begin{equation} \label{E:Eq-6}
\frac{t}{2} \,(\varphi ^2)' + \varphi ^2 + 1 = \pm a  \sqrt{\varphi ^2 + 1}.
\end{equation}
Now, we  set $z(t) = \sqrt{\varphi ^2(t) +1}$, so, equation \eqref{E:Eq-6} takes the form
\begin{equation} \notag
z' + \frac{1}{t}\, z = \pm \frac{a}{t}.
\end{equation}
The general solution of the last equation is given by the formula $z(t) = \frac{\pm at +c}{t}$, $ c = const$.
Hence, the general solution of \eqref{E:Eq-6} is
\begin{equation} \notag
\varphi(t) = \pm \frac{1}{t} \sqrt{(\pm a t+ c)^2 -t^2}.
\end{equation}

\end{proof}

Similarly to the proof of Theorem \ref{T:quasi-minimal-a} we obtain the classification of all quasi-minimal meridian surface of  type $\mathcal{M}'_b$.

\begin{theorem} \label{T:quasi-minimal-b}
Let  $\mathcal{M}'_b$ be a meridian surface on $\mathcal{M}^I$ defined by \eqref{E:Eq-1-b}. Then
$\mathcal{M}'_b$ is quasi-minimal if and only if the curve $c$   has constant curvature $\kappa = a = const,  \, a \neq 0$ and the meridian curve $m$ is determined by $f' = \varphi(f)$ where
\begin{equation} \notag
\varphi(t) = \pm \frac{1}{t} \sqrt{(c \pm a t)^2 + t^2},   \quad c = const,
\end{equation}
$g(u)$ is defined by $g' = \sqrt{f'^2-1}$. 
\end{theorem}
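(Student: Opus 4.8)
The plan is to mirror the proof of Theorem \ref{T:quasi-minimal-a}, since the only structural change is the sign of the term $f'^2-1$ under the radical coming from formula \eqref{E:Eq-Hb} instead of \eqref{E:Eq-Ha}. First I would start from the mean curvature vector field of $\mathcal{M}'_b$, which by \eqref{E:Eq-Hb} is
$$H=-\frac{\kappa}{2f}\,n_1+\frac{f f''+(f')^2-1}{2f\sqrt{f'^2-1}}\,n_2,$$
and recall that $\langle n_1,n_1\rangle=1$, $\langle n_2,n_2\rangle=-1$. Hence $\langle H,H\rangle=\frac{\kappa^2}{4f^2}-\frac{(f f''+(f')^2-1)^2}{4f^2(f'^2-1)}$, so $\mathcal{M}'_b$ is quasi-minimal ($H\neq0$, $\langle H,H\rangle=0$) precisely when $\frac{(f f''+(f')^2-1)^2}{f'^2-1}=\kappa^2$ with $\kappa\neq0$. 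Since the left side depends only on $u$ and the right only on $v$, both must equal a nonzero constant $a^2$, giving $\kappa=a=const\neq0$ and the ODE $f f''+(f')^2-1=\pm a\sqrt{f'^2-1}$.

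Next I would integrate this ODE exactly as in Theorem \ref{T:quasi-minimal-a}. Setting $f'=\varphi(f)$ converts it to $\frac{t}{2}(\varphi^2)'+\varphi^2-1=\pm a\sqrt{\varphi^2-1}$. Here is the one place the sign matters: I would substitute $z(t)=\sqrt{\varphi^2(t)-1}$ rather than $\sqrt{\varphi^2+1}$, so that $t z z' = \frac{t}{2}(\varphi^2)'$ and $\varphi^2-1=z^2$, turning the equation into the linear equation $z'+\frac{1}{t}z=\pm\frac{a}{t}$, whose general solution is again $z(t)=\frac{\pm a t+c}{t}$. Undoing the substitution gives $\varphi^2-1=\frac{(\pm a t+c)^2}{t^2}$, i.e. $\varphi^2=\frac{(\pm a t+c)^2+t^2}{t^2}$, which yields
$$\varphi(t)=\pm\frac{1}{t}\sqrt{(c\pm a t)^2+t^2},$$
matching the claimed formula. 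Finally, since the meridian $m$ is spacelike with $f'^2-g'^2=1$, we have $g'=\sqrt{f'^2-1}$, which is exactly the stated determination of $g(u)$ from $f(u)$. Conversely, reversing these steps shows any such $f,g,\kappa$ produces a quasi-minimal $\mathcal{M}'_b$.

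I do not anticipate a serious obstacle; the argument is a routine adaptation. The only subtlety worth a sentence of care is the separation-of-variables step: one must note that $f'^2-1>0$ on the relevant domain (required already for $\mathcal{M}'_b$ to be a well-defined Lorentz surface with a real normal frame, since $g'=\sqrt{f'^2-1}$), so that dividing by $\sqrt{f'^2-1}$ and the substitution $z=\sqrt{\varphi^2-1}$ are legitimate, and that $\kappa\neq0$ forces the common constant $a^2$ to be strictly positive rather than zero, which is what excludes the minimal case covered by Theorem \ref{T:minimal-b}. Everything else is a sign bookkeeping exercise identical to the $\mathcal{M}'_a$ computation.
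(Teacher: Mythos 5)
Your proof is correct and is exactly the adaptation the paper intends: the paper omits the proof of Theorem \ref{T:quasi-minimal-b}, stating only that it is obtained "similarly to the proof of Theorem \ref{T:quasi-minimal-a}", and your argument carries out that adaptation faithfully, with the right sign bookkeeping (the substitution $z=\sqrt{\varphi^2-1}$ and the signature $\langle n_1,n_1\rangle=1$, $\langle n_2,n_2\rangle=-1$). Your added remarks on $f'^2-1>0$ and on why quasi-minimality forces $\kappa\neq 0$ are correct and slightly more careful than the paper's own exposition.
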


The classification of all quasi-minimal meridian surface of  type $\mathcal{M}''$  is given in the next theorem.

\begin{theorem} \label{T:quasi-minimal-c}
Let  $\mathcal{M}''$ be a meridian surface on $\mathcal{M}^{II}$ defined by \eqref{E:Eq-4}. Then
$\mathcal{M}''$ is quasi-minimal if and only if the curve $c$   has  constant curvature $\kappa = a = const, \, a \neq 0$ and the meridian curve $m$ is determined by $f' = \varphi(f)$ where
\begin{equation} \notag
\varphi(t) = \pm \frac{1}{t} \sqrt{t^2 - (c \pm a t)^2},   \quad c = const,
\end{equation}
$g(u)$ is defined by $g' = \sqrt{1-f'^2}$. 
\end{theorem}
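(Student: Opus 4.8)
The plan is to mirror the structure of the proofs of Theorems \ref{T:quasi-minimal-a} and \ref{T:quasi-minimal-b}, adapting each step to the surface $\mathcal{M}''$. First I would recall from the derivative formulas for $\mathcal{M}''$ that its mean curvature vector field is
\begin{equation*}
H = \frac{\kappa}{2f}\,n_1 + \frac{f f''+(f')^2 - 1}{2f\sqrt{1-f'^2}}\,n_2,
\end{equation*}
with $\langle n_1,n_1\rangle = 1$ and $\langle n_2,n_2\rangle = -1$. Thus $\langle H,H\rangle = \dfrac{\kappa^2}{4f^2} - \dfrac{(f f''+(f')^2-1)^2}{4f^2(1-f'^2)}$, and $\mathcal{M}''$ is quasi-minimal precisely when $H\neq 0$ and this quantity vanishes, i.e. when
\begin{equation*}
\frac{(f f''+(f')^2-1)^2}{1-f'^2} = \kappa^2, \qquad \kappa\neq 0.
\end{equation*}

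Next I would use the familiar separation-of-variables argument: the left-hand side depends only on $u$ and the right-hand side only on $v$, so both equal a nonzero constant $a^2$. This gives $\kappa = a = \mathrm{const}\neq 0$ for the curve $c$, and the meridian ODE
\begin{equation*}
f f''+(f')^2 - 1 = \pm a\sqrt{1-f'^2}.
\end{equation*}
Then I would reduce the order exactly as before: setting $f' = \varphi(f)$ converts the equation into
\begin{equation*}
\frac{t}{2}(\varphi^2)' + \varphi^2 - 1 = \pm a\sqrt{1-\varphi^2},
\end{equation*}
and the substitution $z(t) = \sqrt{1-\varphi^2(t)}$ — which is the natural one here since $\langle X,X\rangle = -1$ forces $f'^2<1$ — linearizes it to $z' + \frac{1}{t}z = \pm\frac{a}{t}$. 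The general solution is $z(t) = \dfrac{\pm a t + c}{t}$, hence $\varphi(t)^2 = 1 - z(t)^2 = \dfrac{t^2 - (c\pm a t)^2}{t^2}$, which yields the stated formula $\varphi(t) = \pm\frac{1}{t}\sqrt{t^2-(c\pm a t)^2}$. Finally $g$ is recovered from $f'^2 + g'^2 = 1$, i.e. $g' = \sqrt{1-f'^2}$, completing the description of $m$.

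The one genuinely new point, as opposed to a verbatim transcription, is bookkeeping of signs: the roles of $n_1$ and $n_2$ (and hence which term in $\langle H,H\rangle$ carries the minus sign) differ from the $\mathcal{M}'_a$ case, and the constitutive relation is $f'^2+g'^2=1$ rather than $g'^2 = f'^2\pm 1$, so the square root that appears is $\sqrt{1-f'^2}$. I would check that $z = \sqrt{1-\varphi^2}$ indeed satisfies the claimed first-order linear ODE (a short computation using $(\varphi^2)' = 2\varphi\varphi'$ and $t\varphi' = \varphi \cdot \frac{d\varphi}{dt}\big|\ldots$ — more precisely $\frac{t}{2}(\varphi^2)' = \frac{t}{2}\cdot\frac{d(\varphi^2)}{dt}$) and that the resulting $\varphi$ is real and nonzero on a suitable interval, so that $H\neq 0$ is genuinely achieved and the surface is quasi-minimal and not minimal. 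I do not expect any real obstacle; the main risk is a sign slip, which the explicit formulas in the theorem statement let me verify at the end.
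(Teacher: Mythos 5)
Your proposal is correct and follows exactly the route the paper intends: the paper omits this proof, saying only that it is "similar to the proof of Theorem \ref{T:quasi-minimal-a}," and your argument is the correct instantiation of that analogy, with the sign bookkeeping ($\langle n_1,n_1\rangle=1$, $\langle n_2,n_2\rangle=-1$, constraint $f'^2+g'^2=1$, substitution $z=\sqrt{1-\varphi^2}$) handled properly. Note only that $H\neq 0$ already follows from $\kappa=a\neq 0$ via the $n_1$-component, so no extra check on $\varphi$ is needed for non-minimality.
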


The proof is similar to the proof of Theorem \ref{T:quasi-minimal-a}.

\section{Meridian Surfaces with Constant Mean Curvature in $\mathbb{E}_{2}^{4}$}

In this section we shall classify all meridian surfaces with non-zero constant mean curvature.

Let  $\mathcal{M}'_a$ be a meridian surface on $\mathcal{M}^I$ defined by \eqref{E:Eq-1-a}. The mean curvature vector field $H$ of 
$\mathcal{M}'_a$ is given by formula  \eqref{E:Eq-Ha}. So, we have
\begin{equation} \label{E:Eq-7}
\langle H, H \rangle = \frac{\left(f f''+(f')^2 + 1\right)^2 -\kappa^2 (f'^2 +1)}{4f^2(f'^2 +1)}.
\end{equation}

In the next theorem we classify the meridian surfaces of type  $\mathcal{M}'_a$ for which $\langle H, H \rangle = c= const, \; c \neq 0$.

\begin{theorem} \label{T:CMC-a}
Let $\mathcal{M}'_a$ be a meridian surface on $\mathcal{M}^I$ defined by \eqref{E:Eq-1-a}.
Then  $\mathcal{M}'_a$  has non-zero constant  mean curvature, i.e.  $\langle H, H \rangle = c =const, \, c\neq 0$, if and only if the curve $c$   has constant curvature $\kappa = a = const, \; a \neq 0$ and the meridian curve $m$ is determined by  
$f' = \varphi(f)$ where
\begin{equation} \notag
\begin{array}{l}
\vspace{2mm}
\varphi(t) = \pm \frac{1}{t} \sqrt{ \left(b \pm \frac{t}{2} \sqrt{a^2 +4ct^2} \pm \frac{a^2}{4 \sqrt{c}} \ln | 2\sqrt{c} t +  \sqrt{a^2 +4ct^2}| \right)^2 - t^2  }, \quad  {\rm if} \; c >0, \\
\vspace{2mm}
\varphi(t) = \pm \frac{1}{t} \sqrt{ \left(b \pm \frac{t}{2} \sqrt{a^2 - 4ct^2} \pm \frac{a^2}{4 \sqrt{-c}} \arcsin \frac{2\sqrt{-c}\,t}{a} \right)^2 - t^2  }, \quad  {\rm if} \; c <0,
\end{array}
\end{equation}
$b = const$, and $g(u)$ is defined by $g' = \sqrt{f'^2+1}$. 

\end{theorem}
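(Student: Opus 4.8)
The plan is to mimic the proof of Theorem~\ref{T:quasi-minimal-a}: first separate variables in the condition $\langle H,H\rangle=c$ to pin down the spherical curvature $\kappa$, then reduce the resulting equation for the meridian to a linear first-order ODE via the substitution $z=\sqrt{\varphi^2+1}$, and finally integrate it, distinguishing the sign of $c$.

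First I would start from \eqref{E:Eq-7} and impose $\langle H,H\rangle=c$. Clearing the everywhere-positive denominator $4f^2(f'^2+1)$ gives
\[
\bigl(f f''+(f')^2+1\bigr)^2-\kappa^2(f'^2+1)=4c\,f^2(f'^2+1),
\]
hence
\[
\kappa^2=\frac{\bigl(f f''+(f')^2+1\bigr)^2-4c\,f^2(f'^2+1)}{f'^2+1}.
\]
The left-hand side is a function of $v$ alone and the right-hand side a function of $u$ alone, so both are constant; replacing $n_1$ by $-n_1$ if necessary we may write $\kappa=a=\mathrm{const}$. If $a=0$ then, by \eqref{E:Eq-3-0}, $\overline{\nabla}_X n_1=\overline{\nabla}_Y n_1=0$, so $n_1$ is parallel and $\mathcal M'_a$ lies in a hyperplane (cf.\ the remark after Theorem~\ref{T:minimal-a}); thus we take $a\neq 0$. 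What is left is the meridian equation
\[
\bigl(f f''+(f')^2+1\bigr)^2=(a^2+4c f^2)(f'^2+1),
\]
equivalently $f f''+(f')^2+1=\pm\sqrt{a^2+4cf^2}\,\sqrt{f'^2+1}$ on the set where $a^2+4cf^2\ge 0$.

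Next I would lower the order. Setting $f'=\varphi(f)$ and $t=f$ one has $f f''+(f')^2+1=\frac t2(\varphi^2)'+\varphi^2+1$, and the substitution $z(t)=\sqrt{\varphi^2(t)+1}\ge 1$ — the same device used in Theorem~\ref{T:quasi-minimal-a} — converts the equation, after dividing by the positive factor $z$, into
\[
(tz)'=t z'+z=\pm\sqrt{a^2+4ct^2}.
\]
Integrating, $tz=b\pm\int\sqrt{a^2+4ct^2}\,dt$ with $b=\mathrm{const}$, and since $\varphi^2=z^2-1=\bigl((tz)^2-t^2\bigr)/t^2$ one recovers $\varphi(t)=\pm\frac1t\sqrt{(tz)^2-t^2}$. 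Splitting on the sign of $c$: for $c>0$ the antiderivative of $\sqrt{a^2+4ct^2}$ is $\frac t2\sqrt{a^2+4ct^2}+\frac{a^2}{4\sqrt c}\ln|2\sqrt c\,t+\sqrt{a^2+4ct^2}|$, and for $c<0$ it is $\frac t2\sqrt{a^2+4ct^2}+\frac{a^2}{4\sqrt{-c}}\arcsin\frac{2\sqrt{-c}\,t}{a}$; substituting these into $\varphi=\pm\frac1t\sqrt{(tz)^2-t^2}$ yields the two formulas of the statement. Finally $g$ is recovered from $g'=\sqrt{f'^2+1}$ as established in Section~3. The converse is immediate, since every step above is reversible: a meridian of this form together with $\kappa\equiv a$ produces a surface with $\langle H,H\rangle=c$.

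The step I expect to require the most care is the reduction of the meridian equation: recognising that $z=\sqrt{\varphi^2+1}$ linearises the otherwise awkward nonlinear second-order ODE is the one non-routine idea, and it is borrowed from Theorem~\ref{T:quasi-minimal-a}. Everything after that is elementary, the only genuine branching being the case split $\mathrm{sgn}(c)$ (logarithmic versus inverse-sine antiderivative), which is where the two formulas come from; the remaining work is bookkeeping of the various $\pm$ signs and of the intervals on which the radicands and the argument of $\arcsin$ are admissible.
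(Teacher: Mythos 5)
Your proposal is correct and follows essentially the same route as the paper's own proof: separation of variables in \eqref{E:Eq-7} forces $\kappa=a=\mathrm{const}$, the substitutions $f'=\varphi(f)$ and $z=\sqrt{\varphi^2+1}$ linearise the meridian equation to $(tz)'=\pm\sqrt{a^2+4ct^2}$, and the split on $\mathrm{sgn}(c)$ yields the logarithmic and inverse-sine antiderivatives. (Incidentally, your antiderivative for $c<0$, written with $\sqrt{a^2+4ct^2}$, is the correct one; the factor $\sqrt{a^2-4ct^2}$ appearing in the paper's formula is a sign typo.)
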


\begin{proof}
Let $\mathcal{M}'_a$ be a meridian surface on $\mathcal{M}^I$. It follows from \eqref{E:Eq-7} that the condition on $\mathcal{M}'_a$ to have non-zero constant mean curvature, i.e. $\langle H, H \rangle = c =const, \, c\neq 0$ is equivalent to the equality 
\begin{equation} \notag
\frac{\left(f f''+(f')^2 + 1\right)^2 -\kappa^2 (f'^2 +1)}{4f^2(f'^2 +1)} = c.
\end{equation}
The last equality can be written as 
\begin{equation} \label{E:Eq-8}
\frac{\left(f f''+(f')^2 + 1\right)^2 - 4 c f^2 (f'^2 +1)}{f'^2 +1} = \kappa^2.
\end{equation}
Since the left-hand side of \eqref{E:Eq-8} is a function of $u$, the right-hand side of \eqref{E:Eq-8}  is a function of $v$,  we obtain
\begin{equation*} 
\begin{array}{l}
\vspace{2mm}
\kappa = a, \quad a = const \neq 0; \\
\vspace{2mm}
\frac{\left(f f''+(f')^2 + 1\right)^2 - 4 c f^2 (f'^2 +1)}{f'^2 +1}= a^2.
\end{array}
\end{equation*}
Hence, the meridian $m$ is determined by the solutions of the following differential
equation:
\begin{equation*} 
\left(f f''+(f')^2 + 1\right)^2 - 4 c f^2 (f'^2 +1)= a^2 (f'^2 +1).
\end{equation*}
We can find the solutions of the last equation  by setting
$f' = \varphi (f)$.  Then  we  obtain
that the function $\varphi  = \varphi (t)$ is a solution of the equation:
\begin{equation} \label{E:Eq-9}
\left(\frac{t}{2} \,(\varphi ^2)' + \varphi ^2 + 1 \right)^2 - 4 c t^2 (\varphi^2 +1)= a^2 (\varphi ^2 + 1).
\end{equation}
If we set $z(t) = \sqrt{\varphi ^2(t) +1}$,  from equation \eqref{E:Eq-9} we get
\begin{equation} \notag
z' + \frac{1}{t}\, z = \pm \frac{\sqrt{a^2 +4ct^2}}{t}.
\end{equation}
The general solution of the last equation is given by the formula $z(t) = \frac{1}{t} \left(b \pm \int{\sqrt{a^2 +4ct^2}} \,dt \right)$, where $b= const$.

If $c>0$, i.e. the mean curvature vector field $H$ is spacelike, then 
$\int{\sqrt{a^2 +4ct^2}} \,dt =  \frac{t}{2} \sqrt{a^2 +4ct^2} + \frac{a^2}{4 \sqrt{c}} \ln | 2\sqrt{c} t +  \sqrt{a^2 +4ct^2}|$.
Hence, $$z(t) = \frac{1}{t}  \left(b \pm  \frac{t}{2} \sqrt{a^2 +4ct^2} \pm \frac{a^2}{4 \sqrt{c}} \ln | 2\sqrt{c} t +  \sqrt{a^2 +4ct^2}| \right)$$
and  the general solution of \eqref{E:Eq-9} is
\begin{equation} \notag
\varphi(t) = \pm \frac{1}{t} \sqrt{ \left(b \pm \frac{t}{2} \sqrt{a^2 +4ct^2} \pm \frac{a^2}{4 \sqrt{c}} \ln | 2\sqrt{c} t +  \sqrt{a^2 +4ct^2}| \right)^2 - t^2}.
\end{equation}

If $c<0$, i.e. the mean curvature vector field $H$ is timelike, then 
$\int{\sqrt{a^2 +4ct^2}} \,dt =  \frac{t}{2} \sqrt{a^2 - 4ct^2} + \frac{a^2}{4 \sqrt{-c}} \arcsin \frac{2\sqrt{-c}\,t}{a}$.
Hence, the function $z(t)$ is given by 
$$z(t) = \frac{1}{t}  \left(b \pm  \frac{t}{2} \sqrt{a^2 - 4ct^2} \pm \frac{a^2}{4 \sqrt{-c}} \arcsin \frac{2\sqrt{-c}\,t}{a} \right)$$
and  the general solution of \eqref{E:Eq-9} is
\begin{equation} \notag
\varphi(t) = \pm \frac{1}{t} \sqrt{ \left(b \pm \frac{t}{2} \sqrt{a^2 - 4ct^2} \pm \frac{a^2}{4 \sqrt{-c}} \arcsin \frac{2\sqrt{-c}\,t}{a} \right)^2 - t^2  }.
\end{equation}

\end{proof}

Now, let  $\mathcal{M}'_b$ be a meridian surface on $\mathcal{M}^I$ defined by \eqref{E:Eq-1-b}. Hence, the mean curvature vector field $H$ of 
$\mathcal{M}'_b$ is given by formula  \eqref{E:Eq-Hb} and  
\begin{equation*} \label{E:Eq-7-b}
\langle H, H \rangle = \frac{\kappa^2 (f'^2 -1)-\left(f f''+(f')^2 - 1\right)^2}{4f^2(f'^2 -1)}.
\end{equation*}

The classification of the meridian surfaces of type  $\mathcal{M}'_b$ for which $\langle H, H \rangle = c= const, \; c \neq 0$ is given in the following theorem.

\begin{theorem} \label{T:CMC-b}
Let $\mathcal{M}'_b$ be a meridian surface on $\mathcal{M}^I$ defined by \eqref{E:Eq-1-b}.
Then  $\mathcal{M}'_b$  has non-zero constant  mean curvature, i.e.  $\langle H, H \rangle = c =const, \, c\neq 0$, if and only if the curve $c$   has constant curvature $\kappa = a = const, \; a \neq 0$ and the meridian curve $m$ is determined by  
$f' = \varphi(f)$ where
\begin{equation} \notag
\begin{array}{l}
\vspace{2mm}
\varphi(t) = \pm \frac{1}{t} \sqrt{ \left(b \pm \frac{t}{2} \sqrt{a^2 - 4ct^2} \pm \frac{a^2}{4 \sqrt{c}} \arcsin \frac{2\sqrt{c}\,t}{a} \right)^2 + t^2  }, \quad  {\rm if} \; c >0, \\
\vspace{2mm}
\varphi(t) = \pm \frac{1}{t} \sqrt{ \left(b \pm \frac{t}{2} \sqrt{a^2 -4ct^2} \pm \frac{a^2}{4 \sqrt{-c}} \ln | 2\sqrt{-c} t +  \sqrt{a^2 -4ct^2}| \right)^2 + t^2  }, \quad  {\rm if} \; c <0,
\end{array}
\end{equation}
$b = const$, and $g(u)$ is defined by $g' = \sqrt{f'^2-1}$. 
\end{theorem}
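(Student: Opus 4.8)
The plan is to mirror the argument used for Theorem~\ref{T:CMC-a}, with the adjustments forced by the causal character of the normal frame. First I would compute $\langle H,H\rangle$ directly from \eqref{E:Eq-Hb}: since $\langle n_1,n_1\rangle=1$ and $\langle n_2,n_2\rangle=-1$, one obtains
\[
\langle H,H\rangle=\frac{\kappa^2(f'^2-1)-\big(ff''+(f')^2-1\big)^2}{4f^2(f'^2-1)},
\]
as recorded just before the statement. Imposing $\langle H,H\rangle=c$ with $c\neq 0$ and clearing denominators gives
\[
\frac{\big(ff''+(f')^2-1\big)^2-4cf^2(f'^2-1)}{f'^2-1}=\kappa^2 .
\]
The left-hand side depends only on $u$ and the right-hand side only on $v$, so both sides equal a nonzero constant $a^2$; hence $\kappa\equiv a$ and the meridian $m$ must satisfy $\big(ff''+(f')^2-1\big)^2-4cf^2(f'^2-1)=a^2(f'^2-1)$. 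The degenerate possibility $f'^2\equiv 1$ (that is, $g'\equiv 0$) is dismissed separately: it forces $\kappa^2=4cf^2$, with $\kappa$ a function of $v$ and $f$ a function of $u$, hence $\kappa\equiv 0$ and $H=0$, contrary to $c\neq 0$.

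Second, I would lower the order by the standard substitution $f'=\varphi(f)$, so that $ff''+(f')^2=\tfrac{t}{2}(\varphi^2)'+\varphi^2$ with $t=f$, turning the ODE into
\[
\Big(\tfrac{t}{2}(\varphi^2)'+\varphi^2-1\Big)^2=(a^2-4ct^2)(\varphi^2-1).
\]
The linearizing step is to set $z(t)=\sqrt{\varphi^2(t)-1}$ (this is where the $\mathcal{M}'_b$ case differs from the $+1$ appearing for $\mathcal{M}'_a$, the discrepancy coming from the condition $f'^2-g'^2=1$). Then $(\varphi^2)'=2zz'$ and $\tfrac{t}{2}(\varphi^2)'+\varphi^2-1=z(tz'+z)$, so after cancelling the nonzero factor $z^2$ the equation collapses to $(tz'+z)^2=a^2-4ct^2$, i.e.
\[
z'+\frac1t\,z=\pm\frac{\sqrt{a^2-4ct^2}}{t},\qquad\text{equivalently}\qquad (tz)'=\pm\sqrt{a^2-4ct^2}.
\]

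Finally I would integrate: $tz=b\pm\int\sqrt{a^2-4ct^2}\,dt$. For $c>0$ the primitive is $\tfrac{t}{2}\sqrt{a^2-4ct^2}+\tfrac{a^2}{4\sqrt{c}}\arcsin\tfrac{2\sqrt{c}\,t}{a}$; for $c<0$ one has $a^2-4ct^2=a^2+4|c|t^2$ and the primitive is $\tfrac{t}{2}\sqrt{a^2-4ct^2}+\tfrac{a^2}{4\sqrt{-c}}\ln\big|2\sqrt{-c}\,t+\sqrt{a^2-4ct^2}\big|$. Dividing by $t$ yields $z(t)$, and then $\varphi=\pm\sqrt{z^2+1}=\pm\tfrac1t\sqrt{(tz)^2+t^2}$ reproduces exactly the two formulas in the statement, with $g$ recovered from $g'=\sqrt{f'^2-1}$; the converse direction is checked by substituting these $\varphi$ and $\kappa=a$ back into \eqref{E:Eq-Hb}. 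There is no serious obstacle beyond bookkeeping: the only subtle points are keeping track of the timelike $n_2$ (hence the sign pattern in $\langle H,H\rangle$), choosing $z=\sqrt{\varphi^2-1}$ rather than $\sqrt{\varphi^2+1}$, and noting that the $\arcsin$ and $\ln$ branches are swapped relative to Theorem~\ref{T:CMC-a} because the quantity under the square root is now $a^2-4ct^2$ instead of $a^2+4ct^2$.
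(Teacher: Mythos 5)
Your proposal is correct and follows exactly the route the paper intends: the paper only says the proof is ``similar to Theorem~\ref{T:CMC-a}'', and your adaptation --- the substitution $z=\sqrt{\varphi^2-1}$ (rather than $\sqrt{\varphi^2+1}$), the linear equation $(tz)'=\pm\sqrt{a^2-4ct^2}$, and the two primitives according to the sign of $c$ --- is precisely that adaptation, with the welcome extra of dismissing the degenerate case $g'\equiv 0$. One sign slip to fix in the write-up: from $\langle H,H\rangle=c$ the separated equation should read
\begin{equation*}
\kappa^2=\frac{\big(ff''+(f')^2-1\big)^2+4cf^2(f'^2-1)}{f'^2-1},
\end{equation*}
with a plus in the numerator (since the constancy condition is $\kappa^2(f'^2-1)-\big(ff''+(f')^2-1\big)^2=4cf^2(f'^2-1)$); your subsequent display $\big(\tfrac{t}{2}(\varphi^2)'+\varphi^2-1\big)^2=(a^2-4ct^2)(\varphi^2-1)$ is already the corrected version, so nothing downstream, and none of the final formulas, is affected.
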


The proof of this theorem is similar to the proof of Theorem \ref{T:CMC-a}.

At the end of this section we give the classification of  the meridian surfaces of type  $\mathcal{M}''$ for which $\langle H, H \rangle = c= const, \; c \neq 0$.

\begin{theorem} \label{T:CMC-c}
Let  $\mathcal{M}''$ be a meridian surface on $\mathcal{M}^{II}$ defined by \eqref{E:Eq-4}. Then  $\mathcal{M}''$ 
  has non-zero constant  mean curvature, i.e.  $\langle H, H \rangle = c =const, \, c\neq 0$, if and only if the curve $c$   has constant curvature $\kappa = a = const, \; a \neq 0$ and the meridian curve $m$ is determined by  
$f' = \varphi(f)$ where
\begin{equation} \notag
\begin{array}{l}
\vspace{2mm}
\varphi(t) = \frac{1}{t} \sqrt{t^2 - \left(b \mp \frac{t}{2} \sqrt{a^2 - 4ct^2} \mp \frac{a^2}{4 \sqrt{c}} \arcsin \frac{2\sqrt{c}\,t}{a} \right)^2}, \quad  {\rm if} \; c >0, \\
\vspace{2mm}
\varphi(t) = \pm \frac{1}{t} \sqrt{t^2- \left(b \mp \frac{t}{2} \sqrt{a^2 -4ct^2} \mp \frac{a^2}{4 \sqrt{-c}} \ln | 2\sqrt{-c} t +  \sqrt{a^2 -4ct^2}| \right)^2 }, \quad  {\rm if} \; c <0,
\end{array}
\end{equation}
$b = const$, and $g(u)$ is defined by $g' = \sqrt{1-f'^2}$. 
\end{theorem}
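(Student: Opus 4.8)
The plan is to mimic the proof of Theorem~\ref{T:CMC-a} verbatim, adjusting only the signs that come from the different casual characters of the frame. First I would recall from the derivation preceding Theorem~\ref{T:quasi-minimal-c} that for a meridian surface $\mathcal{M}''$ on $\mathcal{M}^{II}$ the mean curvature vector field is
\begin{equation*}
H = \frac{\kappa}{2f}\,n_1 + \frac{f f'' + (f')^2 - 1}{2f\sqrt{1-f'^2}}\,n_2,
\end{equation*}
with $\langle n_1,n_1\rangle = 1$ and $\langle n_2,n_2\rangle = -1$. Hence
\begin{equation*}
\langle H,H\rangle = \frac{\kappa^2}{4f^2} - \frac{\big(f f''+(f')^2-1\big)^2}{4f^2(1-f'^2)} = \frac{\kappa^2(1-f'^2) - \big(f f''+(f')^2-1\big)^2}{4f^2(1-f'^2)}.
\end{equation*}

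Next I would impose $\langle H,H\rangle = c \neq 0$ and clear denominators to obtain
\begin{equation*}
\frac{\big(f f''+(f')^2-1\big)^2 + 4 c f^2 (1-f'^2)}{1-f'^2} = \kappa^2.
\end{equation*}
The left side depends only on $u$ and the right side only on $v$, so both equal a constant $a^2$; since $H\neq 0$ forces $\kappa\neq 0$ we get $\kappa = a = const \neq 0$ (this is where one also checks the curve $c$ has constant spherical curvature, exactly as in the quasi-minimal case). The meridian is then governed by
\begin{equation*}
\big(f f''+(f')^2-1\big)^2 = (a^2 - 4cf^2)(1-f'^2).
\end{equation*}
Setting $f' = \varphi(f)$ turns this into an ODE for $\varphi = \varphi(t)$, and then the substitution $z(t) = \sqrt{1-\varphi^2(t)}$ linearizes it to
\begin{equation*}
z' + \frac{1}{t}\,z = \pm\frac{\sqrt{a^2 - 4ct^2}}{t},
\end{equation*}
whose general solution is $z(t) = \frac{1}{t}\big(b \pm \int \sqrt{a^2-4ct^2}\,dt\big)$. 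Splitting into the cases $c>0$ and $c<0$ and evaluating the elementary integral (an arcsine when $c>0$, a logarithm when $c<0$) gives the two formulas in the statement, after solving $\varphi^2 = 1 - z^2$, i.e. $\varphi = \pm\frac{1}{t}\sqrt{t^2 - (tz)^2}$. Finally $g$ is recovered from $g'^2 = 1-f'^2$, which is the defining relation $f'^2+g'^2 = 1$ for this surface.

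The only genuinely delicate point is bookkeeping of signs: because $n_2$ is now timelike (opposite to the $\mathcal{M}'_a$ case), the sign in front of the $\big(ff''+(f')^2-1\big)^2$ term flips, which is why the roles of the arcsine and logarithm branches are interchanged relative to Theorem~\ref{T:CMC-a} and why the final radicand reads $t^2 - (\cdots)^2$ rather than $(\cdots)^2 - t^2$; one must also keep the $\mp$ signs consistent through the substitution $z = \sqrt{1-\varphi^2}$. Everything else — the separation-of-variables argument forcing $\kappa$ constant, the Bernoulli-type reduction, and the integration — is routine and identical in structure to the proof of Theorem~\ref{T:CMC-a}, so I would simply write ``the proof is analogous to that of Theorem~\ref{T:CMC-a}, with the sign changes coming from $\langle n_2, n_2\rangle = -1$'' and display only the key ODE
\begin{equation*}
z' + \frac{1}{t}\,z = \pm\frac{\sqrt{a^2 - 4ct^2}}{t}
\end{equation*}
together with its two integrated forms.
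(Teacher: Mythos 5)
Your proposal is correct and follows exactly the route the paper intends: it reproduces, with the sign changes dictated by $\langle n_1,n_1\rangle=1$, $\langle n_2,n_2\rangle=-1$ and the relation $g'^2=1-f'^2$, the separation-of-variables argument and the substitution $z=\sqrt{1-\varphi^2}$ leading to the linear ODE $z'+z/t=\pm\sqrt{a^2-4ct^2}/t$, which is precisely the analogue of the paper's proof of Theorem~\ref{T:CMC-a} (the paper omits the proof of Theorem~\ref{T:CMC-c} entirely). The only (shared) imprecision is the parenthetical claim that $H\neq 0$ forces $\kappa\neq 0$ --- unlike in the quasi-minimal case this does not follow, since $H$ could be a nonzero multiple of $n_2$ alone; but the paper's own statement and proof of Theorem~\ref{T:CMC-a} gloss over the same point.
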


 \vskip 5mm \textbf{Acknowledgments:}
The second author is partially supported by the Bulgarian National Science Fund,
Ministry of Education and Science of Bulgaria under contract
DFNI-I 02/14.

The paper is prepared
during the first author's visit at the Institute of
Mathematics and Informatics at the Bulgarian Academy of Sciences,
Sofia, Bulgaria  in November -- December 2015.

\bigskip

\end{document}